\title{Generic isomorphism classes of abelian groups}
\author[U. B. Darji]{Udayan B. Darji}
\address{Department of Mathematics, University of Louisville, Louisville, KY 40292, USA}
\email{ubdarj01@louisville.edu}
\author[M. Elekes]{M{\'a}rton Elekes}
\address{Alfréd Rényi Institute of Mathematics, Reáltanoda u. 13-15, 1053 Budapest, Hungary, AND Institute of Mathematics, Eötvös Loránd University, Pázmány P. sétány 1/c, 1117 Budapest, Hungary
\url{http://www.renyi.hu/~emarci}}
\email{elekes.marton@renyi.hu}
\author[T. K{\'a}tay]{Tam{\'a}s K{\'a}tay}
\address{Institute of Mathematics, Eötvös Loránd University, Pázmány P. sétány 1/c, 1117 Budapest, Hungary}
\email{13heted@gmail.com}
\author[A. Kocsis]{Anett Kocsis}
\address{Institute of Mathematics, Eötvös Loránd University,
    Pázmány P. sétány 1/c, 1117 Budapest, Hungary}
\email{sakkboszi@gmail.com}
\author[M. P{\'a}lfy]{M{\'a}t{\'e} P{\'a}lfy}
\address{Alfréd Rényi Institute of Mathematics, Reáltanoda u. 13-15, 1053 Budapest, Hungary, AND Institute of Mathematics, Eötvös Loránd University, Pázmány P. sétány 1/c, 1117 Budapest, Hungary}
\email{palfymateandras@gmail.com}
\date{March 2023}
\begin{document}

\keywords{generic property, compact connected abelian group, torsion-free abelian group, universal solenoid}

\subjclass[2020]{Primary 22B99, 54E52; Secondary 22C05, 54H05, 03E15}

\begin{abstract}
We prove that the universal solenoid is the generic (in the sense of Baire category) connected compact metrizable abelian group. We also settle the dual problem in the sense of Pontryagin duality: $(\rat,+)$, which is the dual of the universal solenoid, is the generic countably infinite torsion-free abelian group.
\end{abstract}

\maketitle

\section{Introduction}
\label{s.introduction}

\textbf{Genericity.} The generic behavior, in the sense of Baire category, has been extensively studied in various classes of mathematical objects for almost a century. For example, classical results describe generic properties of continuous functions \cite{BANACH_1931, BRUCKNER_GARG_1977, KIRCHHEIM_1995}, homeomorphisms \cite{OXTOBY_1937}, and continua \cite{BING_1951}. In recent decades, genericity has become the subject of increasing interest and intensive research. To mention only a handful of the large number of relevant works, homeomorphisms of compact spaces \cite{GLASNER_WEISS_2001, AKIN_HURLEY_KENNEDY_2003, KECHRIS_ROSENDAL_2007, AKIN_GLASNER_WEISS_2008, HOCHMAN_2008}, automorphisms of countable homogeneous structures \cite{TRUSS_1992, KECHRIS_ROSENDAL_2007}, relational structures \cite{CAMERON_1991, POUZET_ROUX_1996, KRUCKMAN_2016}, Polish metric and Banach spaces \cite{VERSHIK_2004, CUTH_DOLEZAL_DOUCHA_KURKA_2022}, Fraïsse limits \cite{KABLUCHKO_TENT_2017, KABLUCHKO_TENT_2017_2}, and group representations \cite{GLASNER_KITROSER_MELLERAY_2016, DOUCHA_MALICKI_2019} have been studied from the viewpoint of genericity. In some cases \cite{KUBIS_2016, KRAWCZYK_KUBIS_2021, BARTOS_KUBIS_2022}, genericity is defined and studied via an abstract Banach-Mazur game.

Surprisingly, groups, undoubtedly central objects in mathematics, had not been studied from this perspective until 2020. This line of research was initiated by I.~Goldbring, S.~Kunnawalkam Elayavalli, Y.~Lodha \cite{GOLDBRING}, and, independently, by M.~Elekes, B.~Gehér, K.~Ka\-na\-las, T.~Kátay, T.~Keleti, A.~Kocsis, and M.~Pálfy \cite{ELEKES, ELEKES22}. Among others, the authors of \cite{GOLDBRING} proved that every isomorphism class is meager in the space $\calg$ of countably infinite groups. They also studied the generic behavior in various subspaces of $\calg$ defined by natural group properties. The authors of \cite{ELEKES, ELEKES22} proved that the isomorphism class of the group $\bigoplus_{i\in\nat}(\rat/\int)$ is comeager in the subspace of abelian groups. They also initiated the study of generic properties of topological groups and proved the following.

\begin{theorem}\label{t.odometer}\cite{ELEKES22}
The isomorphism class of the countably infinite power of the universal odometer is comeager in the space of compact metrizable abelian groups.
\end{theorem}

The present paper is a successor of \cite{ELEKES22}. We study the generic behavior in two classes of abelian groups, which are dual to each other (in the sense of Pontryagin duality): connected compact metrizable abelian groups and torsion-free (discrete) countable abelian groups. It turns out that the former problem involves so-called indecomposable continua. The study of indecomposable continua is a classical branch of continuum theory (see Section~\ref{s.connected_CMA} for the definition). Some well-known examples are the pseudo-arc, the boundary of the Lakes of Wada, and solenoids.

\textbf{Solenoids.} For a sequence of primes $(p_0,p_1,\ldots)$, consider the following inverse system: for each $i\in\nat$ let $S_i$ be the circle group and $f_i:S_{i+1}\to S_i$ be the map $x\mapsto p_i\cdot x$ (mod 1), which is a continuous homomorphism. The topological group defined as the inverse limit of such an inverse system is called a \emph{solenoid}.

Solenoids were introduced by L.~Vietoris \cite{VIETORIS27} and D.~van Dantzig \cite{VANDANTZIG30}, and they have been extensively studied since then. For example, after S.~Smale's groundbreaking paper \cite{SMALE67}, R.~F.~Williams \cite{WILLIAMS69} showed that solenoids emerge as expanding attractors in dynamical systems. C.~L.~Hagopian \cite{HAGOPIAN77} proved that solenoids are the only homogeneous circle-like continua.

It is well-known \cite{AARTS91} that the solenoids associated to $(p_0,p_1,\ldots)$ and $(q_0,q_1,\ldots)$ are homeomorphic if and only if we can delete finitely many numbers from both sequences so that every prime occurs the same number of times in the remaining sequences. It is also known \cite[Thm~25.19]{HEWITT_ROSS} that every solenoid is a quotient of the so-called \emph{universal solenoid}, which is the solenoid associated to a sequence in which every prime occurs infinitely many times.

The main goal of the present paper is to show that the universal solenoid arises as the generic object in the class of connected compact metrizable abelian groups.

\begingroup
\def\thetheorem{\ref{t.main}}
\begin{theorem}
The isomorphism class of the universal solenoid is comeager in the space $\calc(\TT)$ of connected compact metrizable abelian groups.
\end{theorem}
\addtocounter{theorem}{-1}
\endgroup

We also settle the dual problem.

\begingroup
\def\thetheorem{\ref{t.generic_torsionfree}}
\begin{theorem}
The isomorphism class of $(\rat,+)$ is comeager in the space $\calt$ of countably infinite torsion-free abelian groups.
\end{theorem}
\addtocounter{theorem}{-1}
\endgroup

\begin{remark}
If we replace the circles $S_i$ with finite cyclic groups in the construction of a solenoid, the resulting inverse limit is called an \emph{odometer}. Odometers can thus be seen as 0-dimensional solenoids. This perspective is particularly useful when one compares Theorems~\ref{t.odometer} and \ref{t.main}.
\end{remark}

\begin{remark}
The reader may wonder whether the classes of connected compact metrizable abelian groups and torsion-free discrete countable abelian groups are sufficiently rich and diverse for our results to be meaningful. They are. We refer the interested reader to \cite[Chapters 12 and 13]{FUCHS}.
\end{remark}

\section{Preliminaries}
\label{s.preliminaries}

All topological groups are assumed to be Hausdorff.

\subsection{Baire category}
\label{ss.baire_category}

We recall some basic notions and facts about Baire category. All of them can be found in \cite{KECHRIS}.

A topological space $(X,\tau)$ is called \textbf{Polish} if it is separable and completely metrizable (that is, there exists a complete metric $d$ on $X$ that induces $\tau$). It is clear that countable discrete spaces are Polish. In particular, $2=\{0,1\}$ and $\nat$ with the discrete topologies are Polish. It is well-known that countable products of Polish spaces are Polish. In particular, $2^A$ and $\nat^A$ are Polish for any countable set $A$.

A subset $E$ of a topological space $X$ is \textbf{nowhere dense} if the closure of $E$ has empty interior. It is \textbf{meager} if it is a countable union of nowhere dense sets and \textbf{comeager} if its complement is meager.

\begin{theorem}[Baire Category Theorem]\label{t.bct}
In a completely metrizable space, every nonempty open set in nonmeager.
\end{theorem}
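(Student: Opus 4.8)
The plan is to argue by contradiction. Fix a complete metric $d$ inducing the topology of $X$, let $U$ be a nonempty open set, and suppose toward a contradiction that $U$ is meager, say $U \subseteq \bigcup_{n \ge 1} E_n$ with each $E_n$ nowhere dense. Since the closure of a nowhere dense set is again nowhere dense, I may replace each $E_n$ by $\overline{E_n}$ and assume the $E_n$ are closed; then each complement $G_n = X \setminus E_n$ is open and dense. It therefore suffices to produce a single point of $U$ lying in $\bigcap_{n} G_n$, as such a point sits in $U$ but in none of the $E_n$.

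The heart of the argument is a recursive construction of a nested sequence of closed balls. Since $U \cap G_1$ is nonempty and open, I first pick a center $x_1$ and a radius $r_1 \le 1$ with $\overline{B}(x_1, r_1) \subseteq U \cap G_1$. Given $\overline{B}(x_n, r_n)$, the density of $G_{n+1}$ makes $B(x_n, r_n) \cap G_{n+1}$ nonempty and open, so I can choose $x_{n+1}$ and $r_{n+1} \le r_n/2$ with $\overline{B}(x_{n+1}, r_{n+1}) \subseteq B(x_n, r_n) \cap G_{n+1}$. Halving the radii at each step guarantees $r_n \le 2^{-n+1}$, while nestedness forces $d(x_m, x_n) < r_n$ for all $m \ge n$, so $(x_n)$ is a Cauchy sequence.

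This is the one place where completeness is used, and it is the crux of the whole matter: by completeness the Cauchy sequence $(x_n)$ converges to some $x \in X$. Because each ball is closed and contains all later centers, $x \in \overline{B}(x_n, r_n)$ for every $n$; in particular $x \in \overline{B}(x_1, r_1) \subseteq U$ and $x \in \overline{B}(x_n, r_n) \subseteq G_n$ for every $n$. Hence $x \in U \cap \bigcap_n G_n$, contradicting $U \subseteq \bigcup_n E_n$, and so $U$ cannot be meager. The only genuine obstacle is the bookkeeping of the radii, which must be arranged so that convergence is guaranteed and the limit stays trapped inside every closed ball; once that is set up, the contradiction is immediate, and it is precisely completeness—rather than mere metrizability—that supplies the required limit point.
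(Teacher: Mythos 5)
Your proof is correct: it is the classical nested-closed-balls argument for the Baire Category Theorem, with the radii controlled so that the centers form a Cauchy sequence and the completeness of the metric supplies the limit point lying in $U\cap\bigcap_n G_n$. The paper does not prove this statement at all (it cites it as a standard fact from Kechris's book), and your argument is exactly the standard proof found there, so there is nothing to reconcile.
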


It is well-known that a subspace $E$ of a Polish space $X$ is Polish if and only if $E$ is $G_\delta$ in $X$. In particular, closed subspaces are Polish. It is easy to prove that a subset of a Polish space is comeager if and only if it contains a dense $G_\delta$ set.

\subsection{The space of connected compact subgroups}
\label{ss.space_of_subgroups}

For a metric space $(X,d)$ let $\calk(X)$ denote the set of nonempty compact subsets of $X$. It is well-known that $\calk(X)$ is also a metric space with the Hausdorff metric:
$$d_H(K,L)\defeq \inf\{\eps>0:\ K_\eps\supseteq L,\ L_\eps\supseteq K\},$$
where $A_\eps$ is the open $\eps$-neighborhood of the set $A$. It is also well-known that $(\calk(X),d_H)$ inherits several topological properties of $(X,d)$. For example, if $X$ is compact, then $\calk(X)$ is also compact (see \cite[Subsection~4.F]{KECHRIS}).

\begin{prop}
For a metric group $G$ the set
$$\calc(G)\defeq\{K\in \calk(G):\ K\text{ is a connected subgroup of }G\}$$
is closed in $\calk(G)$.
\end{prop}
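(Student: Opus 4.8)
The plan is to use that $\calk(G)$ is a metric space, so that closedness is the same as sequential closedness: I would take a sequence $(K_n)$ in $\calc(G)$ with $K_n\to K$ in the Hausdorff metric $d_H$ and show $K\in\calc(G)$. I would split this into two independent claims, (i) the set of nonempty compact subgroups of $G$ is closed in $\calk(G)$, and (ii) the set of nonempty compact connected subsets of $G$ is closed in $\calk(G)$. Since $\calc(G)$ is exactly the intersection of these two sets, it is then closed.

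The main tool is the standard Kuratowski description of Hausdorff convergence, which I would isolate as a short lemma: for $K_n\to K$ one has $x\in K$ if and only if there are $x_n\in K_n$ with $x_n\to x$, and conversely, whenever $x_{n_k}\in K_{n_k}$ and $x_{n_k}\to x$ then $x\in K$. Both directions follow quickly from $d_H(K_n,K)\to 0$ together with the fact that $K$, being compact, is closed; note that no compactness of $G$ itself is needed, since I only ever pass to limits of sequences that are already known to converge. For (i), each $K_n$ contains the identity $e$, so the constant sequence gives $e\in K$; to see that $K$ is closed under the operation, take $a,b\in K$, choose $a_n,b_n\in K_n$ with $a_n\to a$ and $b_n\to b$, observe $a_nb_n\in K_n$ because $K_n$ is a subgroup, and use continuity of multiplication to get $a_nb_n\to ab$, whence $ab\in K$. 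The identical argument with continuity of inversion gives $a^{-1}\in K$ whenever $a\in K$, so $K$ is a compact subgroup.

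For (ii), I would invoke (or reprove in a line) the classical fact that connectedness passes to Hausdorff limits: if $K$ were disconnected, write $K=A\sqcup B$ with $A,B$ nonempty, compact and disjoint, and put $\delta\defeq d(A,B)>0$; for large $n$ the inequality $d_H(K_n,K)<\delta/3$ forces $K_n\subseteq A_{\delta/3}\cup B_{\delta/3}$ while $K_n$ meets each of these two sets, and $A_{\delta/3},B_{\delta/3}$ are disjoint open sets, contradicting the connectedness of $K_n$. Combining (i) and (ii) finishes the proof. I expect the only point requiring genuine care to be the clean formulation and proof of the two-sided Kuratowski characterization; once that lemma is in place, both the algebraic closure conditions in (i) and the separation estimate in (ii) are essentially mechanical.
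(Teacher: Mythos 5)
Your proof is correct and follows essentially the same route as the paper: both decompose $\calc(G)$ as the intersection of the set of compact subgroups with the set of connected compact sets and show each is closed in $\calk(G)$. The only cosmetic difference is that the paper certifies the subgroup part via the algebraic identity $K\cdot K^{-1}=K$ (leaving the closedness check to the reader), whereas you verify the subgroup axioms of the limit element-wise through the Kuratowski characterization of Hausdorff convergence --- your version simply spells out the details.
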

\begin{proof}
We can write the set of subgroups as
$$\{K\in\calk(G):\ K\cdot K^{-1}=K\}.$$
It is easy to check that this set is closed. Also it follows from the definition of the Hausdorff metric that connected sets form a closed set in $\calk(G)$. Thus $\calc(G)$ is closed.
\end{proof}

\begin{defi}
\label{d.connected_compact_subgroups}
We call $\calc(G)$ \textbf{the space of connected compact subgroups} of $G$.
\end{defi}

Note that if $G$ is compact, then $\calc(G)$ is also compact, therefore the Baire category theorem holds in $\calc(G)$.

\subsection{Pontryagin duality}
\label{ss.pontryagin}

Here we remind the reader to the notion and fundamental properties of Pontryagin duality. Let us denote the circle group by $\circle$. For a locally compact abelian group $G$ the dual group $\what G$ is the set of all continuous homomorphisms $\chi:G\to\circle$ with pointwise addition and the compact-open topology, and $\what G$ is also a locally compact abelian group (see \cite[Chapter~1]{RUDIN}). Also recall:

\begin{theorem}[Pontryagin Duality Theorem]
\label{t.pontryagin}
If $G$ is a locally compact abelian group, then its double dual $\widehatto{G}{\widehat G}$ is canonically isomorphic to $G$ itself.
\end{theorem}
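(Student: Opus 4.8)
The plan is to construct the natural candidate map explicitly and then prove it is a topological isomorphism by reducing the general case to a handful of elementary groups. Define $\alpha_G \colon G \to \widehatto{G}{\widehat G}$ by letting $\alpha_G(x)$ be the evaluation character $\chi \mapsto \chi(x)$ on $\widehat G$. A direct computation shows that $\alpha_G(x)$ is a continuous homomorphism $\widehat G \to \circle$, that $\alpha_G$ is itself a homomorphism, and that it is continuous for the compact-open topologies; this part is routine, and naturality of $\alpha$ in $G$ is immediate. The entire content of the theorem is that this canonical $\alpha_G$ is a bijective homeomorphism.

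First I would establish that $\widehat G$ separates the points of $G$, which yields injectivity of $\alpha_G$. This is the first genuinely nontrivial input: for compact $G$ it follows from the Peter--Weyl theorem, while in general one analyzes the convolution algebra $L^1(G)$, whose maximal ideal space is $\widehat G$, and uses the Gelfand transform together with regularity of $L^1(G)$ to force characters to separate points. Next I would verify the theorem by hand on the building blocks $\mathbb{R}$, $\circle$, $\mathbb{Z}$ and the finite cyclic groups, where the duals can be computed explicitly ($\widehat{\mathbb{R}} \cong \mathbb{R}$, and $\mathbb{Z}$ and $\circle$ are dual to one another).

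The machinery that glues these cases together is the exactness of the dualizing functor. I would show that a short exact sequence $0 \to H \to G \to G/H \to 0$ of locally compact abelian groups, with $H$ closed, is carried to a short exact sequence built from the annihilator $H^\perp \subseteq \widehat G$, so that $\widehat{G/H} \cong H^\perp$ and $\widehat H \cong \widehat G / H^\perp$, and that $\alpha$ is a natural transformation compatible with these sequences. A five-lemma-style diagram chase then shows that if the theorem holds for $H$ and for $G/H$, it holds for $G$. Feeding into this the structure theorem for locally compact abelian groups, namely that every such $G$ possesses an open subgroup isomorphic to $\mathbb{R}^n \times K$ with $K$ compact, reduces the general statement to the elementary groups already settled.

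The hard part will be the surjectivity and openness of $\alpha_G$, that is, showing every character of $\widehatto{G}{\widehat G}$ genuinely arises from evaluation at a point of $G$ and that $\alpha_G$ is open onto its image rather than merely a continuous bijection. This is where the analytic weight of the argument sits: it rests simultaneously on the separation-of-points input and on the structure theorem, and it is the step most resistant to a purely formal treatment. By contrast, injectivity and the computations on $\mathbb{R}$, $\circle$, $\mathbb{Z}$ and finite groups are comparatively routine once the separation property is in hand.
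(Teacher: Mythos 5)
The paper never proves this statement: Theorem~\ref{t.pontryagin} is recalled in the preliminaries as a classical result, with the reader sent to the literature (Rudin, Hewitt--Ross, Morris). So there is no proof of the paper's to compare yours against, and your sketch has to stand on its own. As a plan, it follows one of the standard textbook routes --- evaluation map, explicit base cases, structure theorem plus a five-lemma argument --- which is a legitimate strategy in principle.

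There is, however, one concrete gap as you have written it: the exactness step is circular. You propose to use, for an arbitrary closed subgroup $H\leq G$, the identifications $\what{G/H}\cong H^{\perp}$ and $\what H\cong \what G/H^{\perp}$. The first is elementary, but the second says precisely that every continuous character of a closed subgroup extends to a continuous character of $G$, and in the standard references this extension theorem is a \emph{consequence} of the duality theorem (in Rudin it appears in Chapter~2, after duality is proved in Chapter~1; in Hewitt--Ross it is 24.12, after duality in 24.8). Feeding it into a five-lemma proof of duality therefore assumes what is to be proved. The repair is to run the exact-sequence argument only in the situation the structure theorem actually provides: $H\cong\real^{n}\times K$ an \emph{open} subgroup. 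For open $H$ the extension of characters is elementary ($\circle$ is divisible, hence injective as an abstract abelian group, and an algebraic extension of a character that is continuous on an open subgroup is automatically continuous), and the quotient $G/H$ is discrete. This forces a second correction: your list of base cases must include duality for \emph{discrete} groups, not only for $\real$, $\circle$, $\int$, finite cyclic groups and compact groups; the discrete case needs its own argument (injectivity of $\alpha$ is trivial there, surjectivity is again a Peter--Weyl/Stone--Weierstrass argument applied to the compact dual). With those two repairs your outline becomes the standard structure-theorem proof, though the analytic inputs (separation of points via $L^1(G)$, and openness of all the maps in the five-lemma chase) are still only named, not carried out.
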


\begin{prop}
\label{p.duality}
For a locally compact abelian group $G$ the following hold:

(1) $G$ is compact metrizable $\iff$ $\what G$ is discrete countable. \cite[Corollary on page 96]{MORRIS}

(2) $G$ is compact connected $\iff$ $\what G$ is discrete torsion-free. \cite[Corollary~8.5]{HOFMANN_MORRIS}

(3) $G$ is compact torsion-free $\iff$ $\what G$ is discrete divisible. \cite[Corollary~8.5]{HOFMANN_MORRIS}

(4) For a sequence $(A_i)_{i\in\nat}$ of compact abelian groups we have $\ds\what{\prod_{i\in\nat}A_i}=\bigoplus_{i\in\nat}\what A_i$. \cite[Theorem~2.2.3.]{RUDIN}

(5) A locally compact abelian group $A$ embeds into $G$ $\iff$ $\what A$ is a quotient of $\what G$. \cite[Theorem~2.1.2.]{RUDIN}

(6) The dual of $\circle$ is $\int$. \cite[pages 12-13.]{RUDIN}

(7) The dual of the universal solenoid is $(\rat,+)$ with the discrete topology. \cite[25.4]{HEWITT_ROSS}
\end{prop}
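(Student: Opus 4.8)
The plan is to realize the universal solenoid as an explicit inverse limit, dualize the whole system, and recognize the resulting direct limit as $(\rat,+)$. Fix a sequence of primes $(p_0,p_1,\ldots)$ in which every prime occurs infinitely often, and let $\Sigma$ be the associated universal solenoid, i.e. the inverse limit of the system $\cdots\xrightarrow{f_1}S_1\xrightarrow{f_0}S_0$ with each $S_i=\circle$ and $f_i\colon x\mapsto p_i\cdot x$. The contravariant dual functor turns this inverse limit of compact groups into a direct limit of discrete groups, $\what{\Sigma}=\varinjlim(\what{S_i},\what{f_i})$. By item (6) each $\what{S_i}$ is $\int$, and a direct computation shows that the dual of multiplication by $p_i$ is again multiplication by $p_i$: writing $\circle$ multiplicatively and letting $\chi_n\colon z\mapsto z^n$ denote the characters of $\circle$, we get $\chi_n\circ f_i=\chi_{np_i}$. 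Hence $\what{\Sigma}$ is the direct limit of
$$\int\xrightarrow{\ \cdot p_0\ }\int\xrightarrow{\ \cdot p_1\ }\int\xrightarrow{\ \cdot p_2\ }\cdots.$$

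Next I would identify this direct limit with a subgroup of $\rat$. Embedding the $k$-th copy of $\int$ into $\rat$ by $n\mapsto n/(p_0p_1\cdots p_{k-1})$ is compatible with the bonding maps, since multiplying by $p_k$ and then dividing by $p_0\cdots p_k$ returns $n/(p_0\cdots p_{k-1})$. Thus the direct limit is isomorphic to the subgroup $R\le\rat$ of all rationals whose denominator divides some partial product $p_0p_1\cdots p_{k-1}$. Because every prime occurs infinitely often in the sequence, every positive integer divides some partial product, and therefore $R=\rat$. Finally, the universal solenoid is compact metrizable, so by item (1) its dual is discrete, which yields $\what{\Sigma}=(\rat,+)$ with the discrete topology.

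The main obstacle is the first reduction: justifying that the dual functor carries this particular inverse limit to the corresponding direct limit, and computing the dual bonding maps. Concretely this amounts to proving that every continuous character of $\Sigma$ factors through one of the projections $\pi_i\colon\Sigma\to S_i$, which is the nontrivial direction since $\ker\pi_i$ need not be open. One can establish it either by invoking the general fact that Pontryagin duality converts an inverse limit of compact groups into the direct limit of their duals, or by arguing directly that a continuous character, having relatively compact image in $\circle$, must be trivial on $\ker\pi_i$ for large $i$.

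As a consistency check, recall that every solenoid is a quotient of $\Sigma$, so by item (5) the dual of every solenoid embeds into $\what{\Sigma}$. The duals of solenoids are precisely the torsion-free groups of rank one, and $\rat$ is the unique torsion-free group of rank one that contains a copy of every such group; together with the fact that $\what{\Sigma}$ is torsion-free (item (2), since $\Sigma$ is connected), this is exactly the behavior one expects of $\what{\Sigma}\cong\rat$.
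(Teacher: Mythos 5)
The paper gives no proof of this proposition at all: each of the seven items is quoted directly from the literature with a citation, so there is no argument of the paper's to compare yours against step by step. Your proposal addresses only item (7), taking (1)--(6) as black boxes exactly as the paper does, and your derivation of (7) is correct; it is essentially the standard proof behind the reference to Hewitt--Ross: dualize the defining inverse system, compute that the dual of $f_i$ is again multiplication by $p_i$ on $\int\cong\what\circle$, identify the resulting direct limit with the subgroup of $\rat$ consisting of fractions whose denominators divide some partial product $p_0\cdots p_{k-1}$, and use the hypothesis that every prime recurs infinitely often to conclude that this subgroup is all of $\rat$. You also correctly isolate the only genuinely nontrivial step, namely that duality carries this inverse limit to the direct limit of the duals, i.e.\ that every continuous character of $\Sigma$ factors through some projection $\pi_i$; your sketched direct argument is the right one, since $\circle$ has no small subgroups and the kernels $\ker\pi_i$ form a decreasing sequence of compact subgroups with trivial intersection, so by compactness $\chi(\ker\pi_i)$ eventually lies in a neighborhood of the identity containing no nontrivial subgroup and is therefore trivial, whence $\chi$ factors through $\Sigma/\ker\pi_i\cong S_i$. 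Two minor remarks: the consistency check at the end slightly overstates matters (the duals of solenoids as defined in this paper are the rank-one torsion-free groups whose characteristic has infinite sum, not all rank-one groups), but this is harmless since it is not part of the proof; and a fully self-contained proof of the proposition would still owe arguments for (1)--(6), which both you and the paper delegate to the references.
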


\subsection{The space of connected compact metrizable abelian groups}
\label{ss.connected_CMA}

\begin{notation}
For a group $G$ and a set $I$ we denote by $G^{(I)}$ the direct sum $\bigoplus_{i\in I} G$.
\end{notation}

We view $\circle$ as the additive group $\real/\int$, that is, the real numbers modulo 1, and we use the usual metric $d_\circle$ (with respect to which its diameter is $\frac{1}{2}$). Clearly, the circle group is a connected compact metrizable abelian group (hereafter connected CMA group). Let $\TT$ denote the countably infinite direct product of $\circle$ with itself. Again, it is a connected CMA group and every closed connected subgroup of $\TT$ is a connected CMA group as well. We equip $\TT$ with the metric
$$d(x,y)=\sum_{n=0}^\infty\frac{1}{2^n}d_\circle(x(n),y(n)).$$
Note that, by Proposition~\ref{p.duality} (4) and (6), the dual of $\TT$ is $\int^{(\nat)}$. By (5), a CMA group $G$ is embeddable into $\TT$ if and only if the countable discrete abelian group $\what G$ is a quotient of $\int^{(\nat)}$. Since $\int^{(\nat)}$ is the free abelian group on countably infinitely many generators, every countable discrete abelian group is a quotient of $\int^{(\nat)}$, hence every CMA group is embeddable into $\TT$. Therefore, we may view $\calc(\TT)$ as the space of connected CMA groups (recall Definition~\ref{d.connected_compact_subgroups}).

%\begin{obs}
%\label{o.projections_converge}
%Note that for any $G\in\calc$ we have $\pi_{[1,n]}(G)\times\{(0,0,\ldots)\}\overset{n\to\infty}{\longrightarrow} G$.
%\end{obs}

\subsection{The space of multiplication tables}
\label{ss.multiplication_tables}

The space $\nnn$ of infinite tables of natural numbers is Polish (see Section~\ref{ss.baire_category}). A clopen basis for $\nnn$ consists of sets of the form
\begin{equation}\label{eq.basis1}
    \left\{A\in\nnn:\ A(n_1,m_1)=k_1,\ldots,A(n_l,m_l)=k_l\right\}
\end{equation}
with $n_i,m_i,k_i\in\nat$, ($i=1,\ldots,l$). Let us define the following subspace:
$$\calg\defeq\left\{G\in\nnn:\ G\text{ is a multiplication table and $0$ is its identity element}\right\}.$$

\begin{remark}
In the following few statements it is somewhat unnatural that we denote the identity element of a not necessarily abelian group by 0. We chose this notation because all the rest of the paper concerns abelian groups.
\end{remark}

%\begin{remark}
 %The space of multiplication tables was studied in \cite{ELEKES22}. As we will only focus on torsion-free abelian groups, it is more convenient to describe the topology and basic properties in this space, rather than taking a subspace later.
%\end{remark}

\begin{remark}
When we consider elements of $\calg$ we use the usual shorthands of group theory. For example, to define the subspace of torsion-free groups, we write
$$\{G\in\calg:\  \forall n,k\in\nat^+\ (n^k\neq 0)\}$$
instead of the rather cumbersome
$$\{G\in\calg:\ \forall n, k\in\nat^+\ \underbrace {G(G(\ldots G(G(n,n),n),\ldots, n),n)}_{k-1\text{ times}}\neq 0\}.$$
\end{remark}

\begin{prop}\label{p.basis2}
For any finite sets $\{U_1,\dots,U_k\}$ and $\{V_1,\dots,V_l\}$ of words in $n$ variables $x_1,\dots,x_n$ and for any $a_1,\dots,a_n,b_1,\dots,b_k,c_1,\dots,c_l\in\nat$ the set
\begin{equation}
    \left\{G\in\calg:\ \bigwedge_{i=1}^k U_{i}(a_1,\dots,a_n)=b_i\land \bigwedge_{j=1}^l V_{j}(a_1,\dots,a_n)\neq c_j\right\}
\end{equation}
is clopen, and sets of this form constitute a basis for $\calg$.
\end{prop}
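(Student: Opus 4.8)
The plan is to treat the two assertions separately: first that each set in the statement is clopen, and then that such sets form a basis. For the clopenness, the crux is to show that for any linear combination $U(x_1,\dots,x_n)=\sum_{t=1}^n\lambda_t x_t$ and any constants $a_1,\dots,a_n\in\nat$, the \emph{evaluation map} $E_U\colon\cala\to\nat$ sending $G$ to the value $U(a_1,\dots,a_n)=\sum_{t=1}^n\lambda_t\cdot a_t$ computed inside the group $G$ is locally constant (equivalently continuous, where $\nat$ carries the discrete topology). Granting this, the sets $\{G\in\cala: E_{U_i}(G)=b_i\}=E_{U_i}^{-1}(\{b_i\})$ and $\{G\in\cala: E_{V_j}(G)\neq c_j\}=\cala\setminus E_{V_j}^{-1}(\{c_j\})$ are clopen, and the set in the statement, being a finite intersection of these, is clopen as well.

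To prove that $E_U$ is locally constant I would first record two building blocks. For a fixed $a\in\nat$, the map sending $G$ to the $G$-inverse of $a$ is locally constant: in a group the inverse is unique, so $\cala$ is partitioned into the clopen sets $\{G\in\cala: G(a,k)=0\}$ ($k\in\nat$), on each of which this map is constantly $k$. Second, if $p,q\colon\cala\to\nat$ are locally constant, then $G\mapsto G(p(G),q(G))$ is locally constant: near a point $G_0$ we may pass to a neighborhood on which $p\equiv p(G_0)$ and $q\equiv q(G_0)$, and then intersect with the clopen set $\{G: G(p(G_0),q(G_0))=G_0(p(G_0),q(G_0))\}$, on which the composite is constant. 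Now $E_U$ is obtained from the constant maps $a_1,\dots,a_n$, the finitely many inverse maps needed for the negative coefficients, and finitely many applications of the group operation; by induction on the number of these steps, $E_U$ is locally constant.

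For the basis claim, recall that the basic clopen sets of $\nnn$ described in \eqref{eq.basis1}, intersected with $\cala$, form a basis for the subspace $\cala$. It therefore suffices to observe that each generating condition $A(n_i,m_i)=k_i$ is already a special case of the conditions allowed in the statement: taking $U(x_1,x_2)=x_1+x_2$, $a_1=n_i$, $a_2=m_i$ and $b=k_i$, the equation $U(a_1,a_2)=b$ reads exactly $G(n_i,m_i)=k_i$. Hence every set of the form \eqref{eq.basis1} intersected with $\cala$ is of the stated form (with no inequality constraints), so the sets in the statement constitute a basis.

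The only genuine obstacle is the first part, and specifically the fact that the intermediate values arising while computing $U(a_1,\dots,a_n)$ in $G$ --- most notably the inverses of the $a_t$ with negative coefficients --- depend on $G$ rather than being fixed labels. The two building-block lemmas are designed precisely to handle this: uniqueness of inverses makes the inverse map locally constant, and the closure of local constancy under the group operation lets the computation proceed through $G$-dependent arguments while staying locally constant. Everything else is a routine induction.
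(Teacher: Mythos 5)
Your proof is correct. The paper itself gives no argument for this proposition---it defers to a sketch in the cited reference \cite{ELEKES22}---and that sketch proceeds essentially as you do: the value of a term evaluated in a group operation $G\in\cala$ depends on only finitely many entries of the multiplication table (your locally constant evaluation maps, with uniqueness of inverses handling negative coefficients), so the stated sets are clopen, and since they are open and include the relative basic clopen sets of $\nnn$ they form a basis.
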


For a sketch of proof, see \cite[Proposition 3.5]{ELEKES22}.

In particular, we are interested in the following subspace of $\calg$:
$$\calt\defeq\left\{G\in\calg:\ G \text{ is the multiplication table of a  torsion-free abelian group}\right\}.$$

We would like to apply the Baire category theorem in $\calt$, hence we need to check that it is Polish. It is easy to see that $\calg$ is a $G_{\delta}$ subspace of $\nnn$ (see \cite[Proposition 3.1]{ELEKES} for the precise calculation). For $\calt$, observe that
$$\calt=\{G\in\calg:\ \forall n,k\in\nat\ (nkn^{-1}k^{-1}=0)\}\cap\{G\in\calg:\ \forall n,k\in\nat^{+} n^k\neq 0\}=$$
$$=\left(\bigcap_{n,k\in\nat}\underbrace{\{G\in\calg:\ nkn^{-1}k^{-1}=0\}}_{\text{clopen by Proposition~\ref{p.basis2}}}\right)\cap \left(\bigcap_{n,k\in\nat^+}\underbrace{\{G\in\calg:\ n^k\neq 0\}}_{\text{clopen by Proposition~\ref{p.basis2}}}\right)$$
is a closed subset of $\calg$, thus it is Polish.

%$$\calt=\bigcap_{n,k\in\nat}\{G\in\calg: \ G(n,k)=G(k,n)\}
%\cap\bigcap_{n,k\in\nat^{+}}\{G\in \calg:  k\cdot n\neq 0\} %$$
%$$\calt=\left(\bigcap_{n,k\in\nat}\underbrace{\{G\in\calg:k+n=k+n\}}_{\text{clopen}}\right)\cap\left(\bigcap_{n,k\in\nat^{+}}\underbrace{\{G\in \calg: k\cdot n\neq 0\}}_{\text{clopen}}\right).$$

%$$\calt=\left(\bigcap_{n,k\in\nat}{\{G\in\calg:k+n=k+n\}}\right)\cap\left(\bigcap_{n,k\in\nat^{+}}{\{G\in \calg: k\cdot n\neq 0\}}\right),$$
 %which is closed, since both expressions are closed in $\calg$ for all $n,k\in\nat$:
%$$\{G\in\calg:k+n=k+n\}=\bigcap_{l\in\nat}\underbrace{\{G\in\calg:k+n= l,\ k+n=l\}}_{\text{basic clopen}}$$
%$$\{G\in \calg: k\cdot n\neq 0\}=\calg\setminus\bigcup_{x_1,x_2\ldots x_{k-2}\in\nat}\underbrace{\{G\in \calg: n+n=x_1,\ x_1+n=x_2,\ \ldots, x_{k-2}+n=0\}}_{\text{basic clopen}}.$$

%We also use inverses for convenience:

%Fix any $n,m,k\in\nat$. Then e.g., $n-m=k$ abbreviates $\forall x\ (m+x=0\implies n+x=k)$. Clearly, this defines a closed subset of $\calg$. However, the group axioms imply that it is equivalent to $\exists x\ (m+x=1\land n+x=k)$, which defines an open set. Thus $n-m=k$ defines a clopen set.

%The following proposition will be useful when we are working with more complicated expressions:

\begin{center}
\fbox{\parbox{0.8\textwidth}{Hereafter we work only with abelian groups, therefore we use additive notation and write addition table instead of multiplication table.}}
\end{center}

\begin{notation}
    We reserve the notation $\wtilde G$ for the unique group of addition table $G$ and underlying set $\nat$.
\end{notation}

\begin{defi}\label{d.groupprop}
    A set $\calp\subseteq\calt$ is a \textbf{group property} if it is invariant under isomorphism. That is, for any $G\in\calt$, if $\wtilde{G}$ is isomorphic to $\wtilde{H}$ for some $H\in\calp$, then $G\in\calp.$ 
\end{defi}

\begin{notation}\label{n.supp}
If $\calb$ is a basic clopen set of the form
$\{G\in\calt:\ \forall i,j\leq k\ (i+j=m_{i,j})\}$ with $k\in\nat$ and $m_{i,j}\in\nat$ for all $i,j\leq k$, then let $\supp\calb\defeq\{1,\dots,k\}\cup\{m_{i,j}:\ i,j\leq k\}$.
\end{notation}

We will need the following lemma.

\begin{lemma}\label{l.dense1}
    Let $\calp\subseteq \cals\subseteq \calt$ be group properties. If for every $G\in\cals$ there exists some $H\in\calp$, such that $\wtilde{G}$ can be embedded into $\wtilde{H}$, then $\calp$ is dense in $\cals$.
\end{lemma}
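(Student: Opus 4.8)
The plan is to verify density directly. It suffices to show that for every $G\in\cals$ and every basic clopen neighborhood $\calb$ of $G$ we have $\calp\cap\calb\neq\emptyset$; since $\calp\subseteq\cals$, any such witness automatically lies in $\cals$ as well. Moreover, by Proposition~\ref{p.basis2} a general basic clopen neighborhood of $G$ fixes only finitely many values of the operation, so by taking $k$ large enough I may assume $\calb$ has the form of Notation~\ref{n.supp}, namely $\calb=\{K\in\calt:\ \forall i,j\leq k\ (K(i,j)=m_{i,j})\}$ with $m_{i,j}=G(i,j)$ for all $i,j\leq k$. The goal is to produce some $H'\in\calp\cap\calb$.

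First I would invoke the hypothesis to obtain $H\in\calp$ together with an embedding $\varphi\colon\wtilde{G}\to\wtilde{H}$. The idea is then to relabel $\wtilde{H}$ so that the finite table recorded by $\calb$ is reproduced exactly. Note that $\supp\calb=\{1,\dots,k\}\cup\{m_{i,j}:\ i,j\leq k\}$ is finite and contains $0$ (because $0=G(0,0)=m_{0,0}$), as well as all the inputs $0,\dots,k$ and all the outputs $m_{i,j}$. Since $\varphi$ is injective, $\varphi|_{\supp\calb}$ is an injection of the finite set $\supp\calb$ into $\nat$; as both $\nat\setminus\supp\calb$ and $\nat\setminus\varphi(\supp\calb)$ are countably infinite, I can extend it to a bijection $\pi\colon\nat\to\nat$ with $\pi|_{\supp\calb}=\varphi|_{\supp\calb}$. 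Now define the operation $H'$ by $H'(a,b)\defeq\pi^{-1}\big(H(\pi(a),\pi(b))\big)$, so that $\pi$ is by construction an isomorphism $\wtilde{H'}\to\wtilde{H}$. Because $\pi(0)=\varphi(0)=0$, the identity of $\wtilde{H'}$ is $0$, hence $H'\in\calt$, and since $\wtilde{H'}\cong\wtilde{H}$ with $H\in\calp$ and $\calp$ a group property, Definition~\ref{d.groupprop} gives $H'\in\calp$.

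It remains to check $H'\in\calb$. For $i,j\leq k$ we have $i,j\in\supp\calb$, so $\pi(i)=\varphi(i)$ and $\pi(j)=\varphi(j)$; using that $\varphi$ is a homomorphism, $H(\pi(i),\pi(j))=H(\varphi(i),\varphi(j))=\varphi(G(i,j))=\varphi(m_{i,j})=\pi(m_{i,j})$, where the last equality uses $m_{i,j}\in\supp\calb$. Therefore $H'(i,j)=\pi^{-1}\big(\pi(m_{i,j})\big)=m_{i,j}$, so $H'$ satisfies all the constraints defining $\calb$, i.e. $H'\in\calp\cap\calb$, which completes the argument.

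I do not expect a serious obstacle here: the content is essentially a bookkeeping argument around the observation that a basic clopen neighborhood constrains the operation only on the finite set $\supp\calb$, and that an abstract embedding can always be promoted to an isomorphic copy matching those finitely many constraints by a suitable relabeling. The one point that must be handled carefully is the definition of $\supp\calb$: it is crucial that it contain the \emph{outputs} $m_{i,j}$ and not merely the inputs $0,\dots,k$, since this is exactly what lets the homomorphism identity $\varphi(m_{i,j})=\pi(m_{i,j})$ close up. Ensuring $\pi(0)=0$, so that $0$ remains the identity and $H'\in\calt$, is the only other thing to watch.
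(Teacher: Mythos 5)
Your proof is correct and is essentially the paper's own argument: the paper derives Lemma~\ref{l.dense1} as a special case of Lemma~\ref{l.suppinjective}, whose proof uses exactly your mechanism of extending $\varphi|_{\supp\calb}$ to a permutation of $\nat$ (fixing $0$) and transporting the group operation of $H$ along it via the action of Remark~\ref{r.natural_action}. The only cosmetic difference is that you specialize to a genuine embedding and construct the relabeled operation $H'$ by hand instead of invoking the more general lemma.
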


As this is a special case of Lemma~\ref{l.suppinjective}, we do not prove it here.

The following remark is interesting in its own right and also proves useful in the proof of Lemma~\ref{l.suppinjective}.

\begin{remark}\label{r.natural_action}
    Let $S_\infty^\ast$ denote the set of permutations of $\nat$ that fix $0$. Then there is a natural $S_\infty^\ast\curvearrowright\calt$ action. For any $\nu\in S_\infty^\ast$ we define the induced homeomorphism $h_\nu: \calt\rightarrow\calt$ as follows: for any $G\in \calt$ the addition table $h_\nu(G)$ is defined by the equations $i+j=\nu(G(\nu^{-1}(i),\nu^{-1}(j)))$. It is an easy exercise to verify that $h_\nu$ is indeed a homeomorphism. Clearly, $\nu$ is an isomorphism between $\wtilde{G}$ and $\wtilde{h_\nu(G)}$, moreover, the orbit of any addition table is exactly its isomorphism class.
\end{remark}

\begin{lemma}\label{l.suppinjective}
    Let $\calp\subseteq\cals\subseteq\calt$ be group properties. Suppose that for every nonempty basic clopen set $\calb=\{G\in\cals:\ \forall i,j\leq k\ (i+ j=m_{i,j})\}$ there exists $G_0\in\calb$ and $H\in \calp$ and a homomorphism $\varphi:\wtilde{G_0}\rightarrow \wtilde{H}$ such that $\varphi|_{\supp \calb}$ is injective. Then $\calp$ is dense in $\cals$.
\end{lemma}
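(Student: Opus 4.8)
The plan is to prove density by testing against a basis. The sets
$$\calb=\{G\in\cals:\ \forall i,j\leq k\ (i+j=m_{i,j})\},$$
which prescribe the full addition table on the initial segment $\{0,\dots,k\}$, form a basis of $\cals$: refining the basis \eqref{eq.basis1}, given any nonempty basic clopen set and a point $G$ in it, the full table of $G$ on a large enough segment $\{0,\dots,k\}$ determines a set of the above form that contains $G$ and is contained in the original set. Hence it suffices to show that every nonempty $\calb$ of this form meets $\calp$.

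Fix such a $\calb$ and apply the hypothesis to obtain $G_0\in\calb$, some $H\in\calp$, and a homomorphism $\varphi:\wtilde{G_0}\to\wtilde{H}$ with $\varphi|_{\supp\calb}$ injective. Since $G_0\in\calb$ we have $G_0(i,j)=m_{i,j}$ for all $i,j\leq k$, so applying $\varphi$ yields the key relations
$$H(\varphi(i),\varphi(j))=\varphi(m_{i,j})\qquad(i,j\leq k).$$
Note that $\varphi(0)=0$ (homomorphisms fix the identity) and that $0=m_{0,0}\in\supp\calb$, so $\{0,1,\dots,k\}\subseteq\supp\calb$. Now I would invoke Remark~\ref{r.natural_action}: because $\varphi|_{\supp\calb}$ is an injection of the finite set $\supp\calb$ into $\nat$ fixing $0$, it extends to a permutation $\nu\in S_\infty^\ast$ with $\nu^{-1}|_{\supp\calb}=\varphi|_{\supp\calb}$ (equivalently $\nu\circ\varphi=\mathrm{id}$ on $\supp\calb$). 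Set $H'=h_\nu(H)$. By Remark~\ref{r.natural_action} we have $\wtilde{H'}\cong\wtilde{H}$, hence $H'\in\calp\subseteq\cals$ since $\calp$ is a group property. Finally, for $i,j\leq k$ we have $\nu^{-1}(i)=\varphi(i)$ and $\nu^{-1}(j)=\varphi(j)$, so
$$H'(i,j)=\nu\big(H(\nu^{-1}(i),\nu^{-1}(j))\big)=\nu\big(H(\varphi(i),\varphi(j))\big)=\nu(\varphi(m_{i,j}))=m_{i,j},$$
the last equality because $m_{i,j}\in\supp\calb$. Thus $H'\in\calb\cap\calp$, and since $\calb$ was an arbitrary nonempty basic clopen set, $\calp$ is dense in $\cals$.

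The only genuinely delicate point is the role played by the injectivity of $\varphi|_{\supp\calb}$: it is exactly what allows $\varphi|_{\supp\calb}$ to be realized as the restriction of a permutation $\nu$, so that relabelling $H$ by $h_\nu$ produces a group operation in the isomorphism class of $\wtilde H$ whose table on $\{0,\dots,k\}$ is precisely the prescribed one. Without injectivity the relabelling would not be well defined and the argument would break down. Everything else—that the full-table sets form a basis, that $h_\nu$ realizes the isomorphism, and the closing table computation—is routine given Proposition~\ref{p.basis2} and Remark~\ref{r.natural_action}.
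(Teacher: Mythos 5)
Your proof is correct and follows essentially the same route as the paper's: apply the hypothesis to get $G_0$, $H$, $\varphi$, extend $(\varphi|_{\supp\calb})^{-1}$ to a permutation $\nu\in S_\infty^\ast$, and check that $h_\nu(H)\in\calp\cap\calb$ via the same table computation. The only difference is that you explicitly justify that the full-table sets $\calb$ form a basis of $\cals$, a point the paper leaves implicit.
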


\begin{proof}%[Proof of Lemma~\ref{l.suppinjective}]
    Fix any nonempty basic clopen set $\calb=\{G\in\cals:\ \forall i,j\leq k\ (i+ j=m_{i,j})\}$. By assumption, there are $G_0\in\calb$ and $H\in \calp$ and a homomorphism $\varphi:\wtilde{G_0}\rightarrow \wtilde{H}$ that is injective on $\supp \calb$. As $\supp \calb$ is finite, there is a bijection $\nu:\nat\rightarrow\nat$ such that $(\nu\circ\varphi)|_{\supp\calb}$ is the identity function. Let $h_\nu:\calt\rightarrow\calt$ be the homeomorphism described in Remark~\ref{r.natural_action}. On the one hand, $h_\nu(H)$ is isomorphic to $H$, hence $h_\nu(H)\in\calp$. On the other hand, for any $i,j\leq k$ we have $h_\nu(H)(i,j)=\nu(H(\nu^{-1}(i),\nu^{-1}(j)))=\nu(H(\varphi(i),\varphi(j)))=\nu(\varphi(G_0(i,j)))=G_0(i,j)$, thus $h_\nu(H)$ and $G_0$ coincide on $\{0,1,\ldots k\}\times \{0,1,\ldots k\}$, which gives us $h_\nu(H)\in\calb$. Consequently, $h_\nu(H)\in \calp\cap\calb$, which completes the proof.    
\end{proof}

\begin{remark}
    Even though we do not need these stronger statements, Lemma~\ref{l.dense1} and \ref{l.suppinjective} remain true if we replace $\calg$ with $\calt$. Essentially the same proof works.
\end{remark}

\section{Genericity among connected compact metrizable abelian groups}
\label{s.connected_CMA}

In this section, we prove that there exists a comeager isomorphism class in the space $\calc(\TT)$ of connected compact metrizable abelian groups. First, let us remind the reader of the following basic fact.

\begin{fact}\label{f.ratvectorspace}
    Every countable divisible torsion-free abelian group is isomorphic to a countable dimensional $\rat$-vector space. (It is easy to check the vector space axioms.)
\end{fact}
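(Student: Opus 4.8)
The plan is to equip $G$ with a $\rat$-scalar multiplication extending its abelian group structure, and then invoke the fact that any $\rat$-vector space has a basis. The starting observation is that divisibility and torsion-freeness together let us divide uniquely: for any $g\in G$ and any $q\in\nat^+$, divisibility provides some $h\in G$ with $qh=g$, and torsion-freeness makes $h$ unique, since $qh_1=qh_2=g$ forces $q(h_1-h_2)=0$ and hence $h_1=h_2$. I would denote this unique element by $\frac1q g$, and then for $\frac pq\in\rat$ with $q>0$ define $\frac pq\cdot g\defeq p\bigl(\tfrac1q g\bigr)$, where $p(\cdot)$ is the usual integer multiple (repeated addition).

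The crux, and the step I expect to require the most care, is checking that this scalar multiplication is well defined, i.e.\ independent of the representation of a rational. Suppose $\frac pq=\frac{p'}{q'}$, that is, $pq'=p'q$. Writing $h=\frac1q g$ and $h'=\frac1{q'}g$, I would compute $qq'(ph)=pq'(qh)=pq'g$ and $qq'(p'h')=p'q(q'h')=p'qg=pq'g$, so that $qq'\bigl(ph-p'h'\bigr)=0$; torsion-freeness then yields $ph=p'h'$, as required. This is really the only place where the interplay of the two hypotheses is essential: divisibility guarantees existence of the divided elements, torsion-freeness guarantees both their uniqueness and the cancellation above.

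With well-definedness in hand, the $\rat$-vector space axioms ($1\cdot g=g$, distributivity over vector and scalar addition, and associativity of scalars) are routine verifications of the same flavour, each reducing to an identity that holds after multiplying through by a suitable positive integer and then cancelling via torsion-freeness. Having realized $G$ as a $\rat$-vector space, I would finish by choosing a basis, which here needs no heavy machinery: enumerating the countably many elements of $G$ and greedily retaining those linearly independent over $\rat$ from the previously chosen ones produces a basis. Since $|G|\le\aleph_0$, this basis is at most countable, so $G$ has countable dimension and is isomorphic to $\rat^{(\kappa)}$ for some $\kappa\le\aleph_0$.
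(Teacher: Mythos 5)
Your proposal is correct and is exactly the argument the paper has in mind: the paper's proof consists solely of the parenthetical remark that one checks the vector space axioms, and you carry this out by defining $\frac1q g$ via divisibility (existence) and torsion-freeness (uniqueness), verifying well-definedness, and extracting a countable basis. No discrepancy to report.
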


Recall that a \textbf{continuum} is a connected compact metric space. We need two fundamental notions from continuum theory.

\begin{defi}\label{d.indecomposable}
A continuum is \textbf{indecomposable} if it cannot be written as the union of two of its proper subcontinua.
\end{defi}

\begin{defi}\label{d.composant}
In a continuum $X$, the \textbf{composant} of a point $p\in X$ is the union of all proper subcontinua of $X$ that contain $p$.
\end{defi}

Now we present a characterization of the universal solenoid.

\begin{prop}
\label{p.u_solenoid_characterized}
The universal solenoid as a topological group is characterized by the following properties:

(1) abelian

(2) compact

(3) metrizable

(4) connected

(5) torsion-free

(6) indecomposable
\end{prop}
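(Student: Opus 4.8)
The proposition is a characterization, so I must show two directions: (a) the universal solenoid has properties (1)--(6), and (b) any topological group with all six properties is isomorphic (as a topological group) to the universal solenoid. Direction (a) is largely a matter of citing known facts, while direction (b) is where the real work lies. My strategy for (b) is to pass to the Pontryagin dual and translate the six topological properties into purely algebraic properties of the dual group, where a clean classification is available.

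\medskip

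\emph{Direction (a).} The universal solenoid is an inverse limit of circle groups, so it is an abelian topological group. As an inverse limit of compact metrizable spaces it is compact and metrizable, and inverse limits of connected continua are connected (standard continuum theory), giving (1)--(4). For torsion-freeness (5) I would use Proposition~\ref{p.duality}(2) together with the fact (from Proposition~\ref{p.duality}(7)) that its dual is $(\rat,+)$: since $\rat$ is torsion-free, the solenoid is connected, and dually I get what I need; more directly, torsion-freeness can be read off from the dual being divisible or by a direct inverse-limit argument. For indecomposability (6) I would invoke the classical result that solenoids are indecomposable continua (this is well known in continuum theory and can be cited).

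\medskip

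\emph{Direction (b), the main content.} Suppose $G$ satisfies (1)--(6). By (1)--(3) and Proposition~\ref{p.duality}(1), the dual $\what G$ is a discrete countable abelian group. By (4) and Proposition~\ref{p.duality}(2), connectedness of $G$ is equivalent to $\what G$ being torsion-free. The plan is then to show that the remaining two properties, (5) torsion-free and (6) indecomposable, force $\what G$ to be isomorphic to $(\rat,+)$. Torsion-freeness of $G$ should dualize (via Proposition~\ref{p.duality}(3), reading compact torsion-free against discrete divisible) to divisibility of $\what G$; so $\what G$ is a countable torsion-free divisible group, hence by Fact~\ref{f.ratvectorspace} a countable-dimensional $\rat$-vector space. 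The crux is to use indecomposability (6) to pin the dimension down to exactly $1$, i.e. to rule out $\what G \cong \rat^{(I)}$ for $\lvert I\rvert \geq 2$. I expect indecomposability of $G$ to correspond, under duality, to the dual group having no nontrivial direct-sum (or suitable subgroup-quotient) decomposition: a decomposition of the continuum into proper subcontinua should translate, via Proposition~\ref{p.duality}(5), into the dual splitting off a proper subgroup or quotient. Once the dimension is forced to be $1$, we get $\what G \cong (\rat,+)$, and by the Pontryagin Duality Theorem~\ref{t.pontryagin} together with Proposition~\ref{p.duality}(7), $G$ is isomorphic as a topological group to the universal solenoid.

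\medskip

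\emph{Main obstacle.} The hard step is establishing the precise duality between ``indecomposable'' on the continuum side and the algebraic rigidity of $(\rat,+)$ on the dual side. Indecomposability is a genuinely topological/continuum-theoretic notion (phrased via subcontinua and composants, Definitions~\ref{d.indecomposable}--\ref{d.composant}), and it is not a priori obvious how it interacts with Pontryagin duality. I would want a lemma to the effect that a connected torsion-free compact abelian group $G$ is \emph{decomposable} exactly when $\what G$ splits as a nontrivial direct sum of $\rat$-vector spaces; the direction ``$\dim \geq 2 \implies$ decomposable'' can plausibly be shown by exhibiting two proper subsolenoids whose union is $G$ using the two (or more) independent $\rat$-summands of the dual, while the converse requires care. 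Alternatively, one can bypass some of this by citing the structure theory of solenoids: any group with properties (1)--(5) is a solenoid (being a one-dimensional connected compact abelian group whose dual embeds in $\rat$), and among solenoids indecomposability singles out the one whose dual is all of $\rat$. Either route reduces the characterization to known algebraic and continuum-theoretic facts, with the solenoid-vs-dimension correspondence being the delicate point to justify cleanly.
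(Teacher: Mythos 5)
Your high-level skeleton coincides with the paper's: dualize, use (1)--(5) together with Proposition~\ref{p.duality} to conclude that $\what G$ is a countable torsion-free divisible discrete group, hence a countable-dimensional $\rat$-vector space by Fact~\ref{f.ratvectorspace}, and then use indecomposability to force $\dim_{\rat}\what G=1$. The gap is exactly at the step you flag as the ``delicate point,'' and neither of your two proposed ways of filling it works as stated. If $\dim_{\rat}\what G\geq 2$, then by Proposition~\ref{p.duality}(4) indeed $G\cong K\times L$ with $K$ and $L$ nondegenerate continua; but you cannot write $G$ as the union of two proper \emph{subsolenoids}, since no group is the union of two proper subgroups, and it is not clear how to exhibit a decomposition into proper subcontinua directly (note that $K$ and $L$ may themselves be indecomposable). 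Your alternative route rests on a false premise: indecomposability does \emph{not} single out the universal solenoid among solenoids --- \emph{every} solenoid is indecomposable --- and a group satisfying (1)--(5) need not be a solenoid at all, since its dual can be $\rat^{(I)}$ for any countable $I$ (e.g.\ the countably infinite power of the universal solenoid satisfies (1)--(5)). The role of (6) is precisely to exclude these higher-dimensional duals, not to distinguish among solenoids. Also, the ``converse'' you worry about (decomposable implies the dual splits) is not needed for the characterization.

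The paper closes the gap with a composant argument that is absent from your proposal: in $K\times L$ the composant (Definition~\ref{d.composant}) of any point $p=(x,y)$ is all of $K\times L$, because for every $q=(u,v)$ the set $(\{x\}\times L)\cup(K\times\{v\})$ is a proper subcontinuum containing both $p$ and $q$; on the other hand, by a classical theorem of Kuratowski, every composant of an indecomposable continuum is meager in it, whereas $K\times L$ is nonmeager in itself by the Baire category theorem. Hence $G\cong K\times L$ cannot be indecomposable, so $\dim_{\rat}\what G=1$, $\what G\cong\rat$, and $G$ is the universal solenoid by Proposition~\ref{p.duality}(7) and Pontryagin duality. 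This one observation is the entire content of the step you left open; your direction (a) is fine and matches the paper.
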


\begin{proof}
By its construction, the universal solenoid clearly satisfies properties (1)-(4). It is well-known that it is indecomposable (see \cite[Definition~2.5 and Theorem 2.7]{NADLER92}. By Proposition~\ref{p.duality} (3) and (7), it is torsion-free.

Now let $G$ be a topological group that satisfies properties (1)-(6). By Proposition~\ref{p.duality}, properties (1)-(5) imply that $\what G$ is a discrete countable torsion-free divisible abelian group. Then, by Fact~\ref{f.ratvectorspace}, it is a countable dimensional $\rat$-vector space, hence %\cite[Chapter~4, Theorem~3.1]{FUCHS}.
 it suffices to prove the following.

\textbf{Claim.} The group $\what G$ is $1$-dimensional over $\rat$.

Suppose it is not. Then, by Proposition~\ref{p.duality} (4), $G$ can be written as $K\times L$, where $K$ and $L$ are nontrivial continua. Pick any $p=(x,y)\in K\times L$. Now the composant of $p$ in $K\times L$ is $K\times L$ itself since for any point $q=(u,v)\in K\times L$ the proper subcontinuum $(\{x\}\times L)\cup (K\times \{v\})$ of $K\times L$ contains both $p$ and $q$. However, by a well-known theorem \cite[§48, Section VI, Theorem 6]{KURATOWSKI_II}, every composant of an indecomposable continuum $\calc$ is meager in $\calc$. This is a contradiction, which proves the claim.
\end{proof}

We are almost ready to prove the main theorem. We need a lemma that roughly says that circles form a dense set in $\calc(\TT)$.

\begin{notation}
\label{n.projections}
For a number $n\in\nat$ and a group $G\in\calc(\TT)$ let $\pi_{[0,n-1]}(G)$ denote the projection of $G$ to the torus formed by the first $n$ coordinates of $\TT$. Similarly, $\pi_{[n,\infty)}(G)$ denotes the projection of $G$ to the infinite dimensional torus formed by the coordinates $n,n+1,\ldots$ of $\TT$.
\end{notation}

For every $n\in\nat^+$ consider the following subspace of $\calc(\TT)$:
$$\calf_n\defeq\{G\in\calc(\TT):\ \pi_{[0,n-1]}(G)\text{ is a circle and } \pi_{[n,\infty)}(G)\text{ is the trivial group }\}.$$

\begin{lemma}
\label{l.dense_in_C}
For every $n\in\nat^+$ the subspace $\calf_n$ is a $\frac{1}{2^{n-1}}$-net in $\calc(\TT)$.
\end{lemma}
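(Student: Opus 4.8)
The plan is to show that every $G\in\calc(\TT)$ lies within distance strictly less than $\frac{1}{2^{n-1}}$ of some member of $\calf_n$. The idea is to truncate $G$ to its first $n$ coordinates, obtaining a subtorus, and then to approximate that subtorus by a circle that winds around it densely.

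First I would set $T\defeq\pi_{[0,n-1]}(G)\times\{0\}\subseteq\TT$, that is, the projection of $G$ onto the first $n$ coordinates re-embedded with all coordinates $\ge n$ equal to $0$. Since $G$ is connected and compact, so is $\pi_{[0,n-1]}(G)$, and a connected compact subgroup of $\circle^n$ is a subtorus; thus $T\cong\circle^d$ for some $d\le n$. The purpose of passing to $T$ is a tail estimate: whenever two elements of $\TT$ agree on the coordinates $0,\dots,n-1$, their distance is at most $\sum_{k\ge n}\frac{1}{2^k}\cdot\frac12=\frac{1}{2^n}$, because $d_\circle$ has diameter $\frac12$. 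Mapping each $g\in G$ to $(g(0),\dots,g(n-1),0,0,\dots)\in T$, and conversely lifting each element of $T$ back to some $g\in G$ with the same first $n$ coordinates, this estimate yields $d_H(G,T)\le\frac{1}{2^n}$.

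Next I would establish the key fact that a subtorus contains circles of arbitrarily good density: for every $\eta>0$ there is a closed one-dimensional circle subgroup $C\subseteq T$ with $d_H(T,C)<\eta$. Working in the standard model $\circle^d$, the circle $C_N\defeq\{(t,Nt,N^2t,\dots,N^{d-1}t):t\in\circle\}$ does the job for large $N$. Indeed, given an arbitrary target $(b_0,\dots,b_{d-1})$ and parametrising $t=(j+r)/N^{d-1}$ with $r\in[0,1)$ and $j$ ranging over $\{0,\dots,N^{d-1}-1\}$, one controls the highest-frequency coordinate $N^{d-1}t\equiv r$ by $r$ and each remaining coordinate by a successive base-$N$ digit of $j$, so that $C_N$ is $\frac{d}{N}$-dense in $\circle^d$; note that $C_N$ is a genuine circle, since the direction $(1,N,\dots,N^{d-1})$ is primitive and the associated map $\circle\to\circle^d$ is injective. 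Transporting $C_N$ through the Lipschitz injective inclusion $\circle^d\cong T\hookrightarrow\circle^n$ produces a circle inside $T$ whose density can be pushed below any prescribed $\eta$. Such a $C$ lies in $\circle^n\times\{0\}$ with trivial tail and circle projection, hence $C\in\calf_n$.

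Finally, choosing $\eta<\frac{1}{2^n}$ and combining the two estimates by the triangle inequality gives
$$d_H(G,C)\le d_H(G,T)+d_H(T,C)<\frac{1}{2^n}+\frac{1}{2^n}=\frac{1}{2^{n-1}},$$
so every $G$ is within $\frac{1}{2^{n-1}}$ of $\calf_n$, which is exactly the net property. I expect the main obstacle to be the density lemma: one must know that connected compact subgroups of $\circle^n$ are subtori, and then carry out the simultaneous Diophantine approximation showing that a single closed circle can $\eta$-approximate a torus of any dimension. The delicate point there is the ordering of the greedy digit choice — fixing the highest-frequency coordinate via the least significant part of $t$ and working downward. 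The tail estimate and the triangle-inequality bookkeeping are routine.
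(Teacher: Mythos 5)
Your construction works only when the projection $T=\pi_{[0,n-1]}(G)$ is a nondegenerate torus, and it breaks down in exactly the case that forces the bound $\frac{1}{2^{n-1}}$ in the statement: when $T$ is the trivial group. This case really occurs, e.g.\ for $G=\{0\}$, or for any $G$ supported on the coordinates $\geq n$, such as $G=\{0\}^n\times\circle\times\{0\}\times\cdots$. For such $G$ you get $T\cong\circle^0=\{0\}$, and there is no circle subgroup $C\subseteq T$ at all, so your ``key fact'' (a circle $\eta$-dense in $T$ for every $\eta>0$) is false for $d=0$ and the final estimate $d_H(G,C)<\frac{1}{2^n}+\eta$ is unattainable: every member of $\calf_n$ must project onto a genuine circle in the first $n$ coordinates, hence cannot be that close to such a $G$. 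The paper treats this as a separate case: it takes $T_n\in\calf_n$ to be the group that is trivial in every coordinate except the $(n-1)$st, where it is $\circle$; since $G$ and $T_n$ then agree (trivially) on coordinates $0,\dots,n-2$, one gets $d_H(G,T_n)\leq\sum_{i\geq n-1}2^{-i}\cdot\tfrac12=\frac{1}{2^{n-1}}$. This degenerate case is precisely why the lemma claims only a $\frac{1}{2^{n-1}}$-net rather than (essentially) a $\frac{1}{2^n}$-net, so it cannot be absorbed into your estimate; your proof needs this additional case.

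Apart from this gap, your argument is essentially the paper's proof: the same truncation with the same tail estimate $d_H(G,T\times\{0\}\times\cdots)\leq\frac{1}{2^n}$, the same identification of $T$ as a torus (the paper cites Cartan's theorem plus the fact that a compact connected abelian Lie group is a torus), and a circle subgroup that is $\eta$-dense in a nondegenerate torus. The only real difference is how that circle is produced: you take the explicit winding circle $\{(t,Nt,\dots,N^{d-1}t):\ t\in\circle\}$ with a base-$N$ digit argument, while the paper observes that the finite group generated by $\left(\frac1N,\frac1{N^2},\dots,\frac1{N^k}\right)$ is a $\delta$-net and takes a $1$-parameter subgroup through this torsion element, which must be a circle. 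Both are correct; also, your appeal to the inclusion $\circle^d\cong T\hookrightarrow\circle^n$ being Lipschitz is true but unnecessary --- plain uniform continuity, as used in the paper, suffices to transport the net.
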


\begin{proof}
Fix a group $G\in\calc(\TT)$. Let $T\defeq\pi_{[0,n-1]}(G)$. Then $d(G,T\times\{0\}\times\{0\}\times\ldots)\leq\frac{1}{2^n}$. Also note that $T$ is a compact connected subgroup of $\circle^n$, hence, by Cartan's theorem, it is a Lie group. Every compact connected abelian Lie group is a torus, hence $T\cong\circle^k$ for some $k\leq n$.

\textbf{Case 1.} $k=0$. Then let $T_n$ be the unique element in $\calf_n$ such that its projection to every coordinate is $\{0\}$, except for the $(n-1)$th coordinate, where it is $\circle$. We have $d(G,T_n)\leq\frac{1}{2^{n-1}}$ because $\pi_{[0,n-2]}(G)=\pi_{[0,n-2]}(T_n)$.

\textbf{Case 2.} The group $T$ is a nondegenerate torus. It suffices to approximate $T$ by its circle subgroups.

\textbf{Claim.} We may assume that $T=\circle^k$ (with the sum metric). Indeed,  pick any isomorphism $\circle^k\to T$. This is uniformly continuous since $\circle^k$ is compact. Then for every $\eps>0$ there is $\delta>0$ such that the image of a $\delta$-net of $\circle^k$ is an $\eps$-net of $T$.

Fix any $\delta>0$. Let $N>\frac{\sqrt k}{\delta}$. It is straightforward to check that the finite subgroup generated by the element $\left(\frac{1}{N},\frac{1}{N^2},\ldots,\frac{1}{N^k}\right)$ is a $\delta$-net in $\circle^k$ (it meets every cube of the $\frac{1}{N}$-grid on $\circle^k$). It is well-known that in a connected compact Lie group every element is contained in a 1-parameter subgroup, which is isomorphic to either $\real$ or $\circle$. Now let $H\leq\circle^k$ be a 1-parameter subgroup that contains the torsion element $\left(\frac{1}{N},\frac{1}{N^2},\ldots,\frac{1}{N^k}\right)$. Then $H$ is isomorphic to $\circle$ and it is a $\delta$-net in $\circle^k$, which concludes the proof.
\end{proof}

\begin{theorem}
\label{t.main}
The isomorphism class of the universal solenoid is comeager in the space $\calc(\TT)$ of connected compact metrizable abelian groups.
\end{theorem}

\begin{proof}
Since properties (1)-(4) of Proposition~\ref{p.u_solenoid_characterized} are automatic in $\calc(\TT)$, it suffices to show that the generic $G\in\calc(\TT)$ is torsion-free and indecomposable.

\textbf{Claim.} The isomorphism class of the universal solenoid is dense in $\calc(\TT)$.

By Lemma~\ref{l.dense_in_C}, it suffices to find universal solenoids $\frac{1}{2^n}$-close to every element of $\calf_n$ for any $n\in\nat^+$. This is easy: if we are given a circle group $S_0\leq\circle^n$, we construct our universal solenoid starting with that circle. That is, for each $i\geq 1$ let $S_i$ be the circle group that is the $(n-1+i)$th factor of $\TT$. Now we can construct a universal solenoid inside $\TT$, whose projection to the first $n$ coordinates is $S_0$. This is a $\frac{1}{2^n}$-approximation, which proves the claim.

It remains to prove that torsion-free groups and indecomposable groups form $G_\delta$ sets in $\calc(\TT)$. The latter is known, see \cite[Remark 5, page 207]{KURATOWSKI_II}.

Let us define  $f_m:\TT\rightarrow \TT$ as $f_m: x\mapsto m\cdot x$. Clearly, $f_m$ is continuous and the set of torsion-free groups is
$$\left\{G\in \calc(\TT):\ \forall m,k\in \nat^+\  \left(0\notin f_m{\left[G\setminus B{\left(0,\tfrac{1}{k}\right)}\right]}\right)\right\}=$$
$$=\bigcap_{m,k\in\nat^+} \underbrace{\left\{G\in \calc(\TT):\ G\cap \underbrace{B{\left(0,\tfrac{1}{k}\right)}^c\cap f_m^{-1}(\{0\})}_{\text{closed}}=\emptyset\right\}}_{\text{open}},$$
which is $G_\delta$.
\end{proof}

\section{Genericity among torsion-free discrete countably infinite abelian groups}\label{s.torsion-free_countable}

In this section, we prove the dual of the result of Section~\ref{s.connected_CMA}, that is, there exists a comeager isomorphism class in the space $\calt$ of torsion-free countably infinite abelian groups.

%To preserve the coherence with Section~\ref{ss.multiplication_tables} we \textbf{do not} switch to additive notation in this section.

%\begin{defi}\label{d.torsionfree}
 %Let us define the space of countably infinite torsion-free abelian groups:
%$$\calt\defeq\left\{G\in \calg: \wtilde{G} \text{ is abelian and torsion-free} \right\}.$$
%\end{defi}

%The following short calculation shows that $\calt$ is closed in the Polish space $\calg$. Thus it is Polish (see Section~\ref{ss.baire_category}), and we can apply Baire category theorem in it.
%$$\calt=\{G\in\calg: \forall n,k\in \nat\ (n\cdot k=k\cdot n)\} \cap\{G\in \calg: \forall n,k \in \nat^{+}  (n^k\neq 0)\} $$
%$$=\left(\bigcap_{n,k\in\nat}\underbrace{\{G\in\calg:k\cdot n=k\cdot n\}}_{\text{clopen by Propositon~\ref{p.basis2}}}\right)\cap\left(\bigcap_{n,k\in\nat^{+}}\underbrace{\{G\in \calg: n^k\neq 0\}}_{\text{clopen by Propositon~\ref{p.basis2}}}\right).$$

Let us explain in what sense Section~\ref{s.torsion-free_countable} is the dual of Section~\ref{s.connected_CMA}. In Theorem~\ref{t.main}, we proved that the isomorphism class of the universal solenoid is comeager in the space of connected compact metrizable abelian groups. By Proposition~\ref{p.duality} (1) and (2), the Pontryagin dual of the property \textit{connected compact metrizable} is \textit{torsion-free discrete countable}. Note that since we study only torsion-free groups, by neglecting the trivial group, we may conveniently replace the word countable with countably infinite, which fits the setting introduced in Subsection~\ref{ss.multiplication_tables}. Thus the dual problem is whether there is a comeager isomorphism class in the space $\calt$ of torsion-free countably infinite abelian groups. A natural candidate is the Pontryagin dual of the universal solenoid, which is $(\rat,+)$ with the discrete topology by Proposition~\ref{p.duality} (7).

First, we will prove that divisible groups form a comeager set in $\calt.$ The following well-known fact will be useful.

\begin{fact}\label{f.embedrat}
    Every countable torsion-free abelian group can be embedded into $\rat^{(\nat)}$.
\end{fact}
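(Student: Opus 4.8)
The plan is to embed $A$ into its divisible hull and then invoke Fact~\ref{f.ratvectorspace}. Concretely, let $A$ be a countable torsion-free abelian group and consider the localization $V\defeq(A\times\nat^+)/{\sim}$, where $(a,n)\sim(b,m)$ iff $ma=nb$ in $A$; write $\frac{a}{n}$ for the class of $(a,n)$ and equip $V$ with the operations $\frac{a}{n}+\frac{b}{m}=\frac{ma+nb}{nm}$ and $\frac{p}{q}\cdot\frac{a}{n}=\frac{pa}{qn}$ for $\frac{p}{q}\in\rat$.

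First I would check that $\sim$ is an equivalence relation and that the operations are well defined; transitivity and well-definedness are precisely where torsion-freeness of $A$ enters. For instance, from $ma=nb$ and $kb=mc$ one derives $m(ka-nc)=0$, whence $ka=nc$ since $m\neq 0$ and $A$ is torsion-free. Granting these checks, $V$ is a $\rat$-vector space. Next, the map $\iota:A\to V$, $a\mapsto\frac{a}{1}$, is a group homomorphism, and it is injective because $\frac{a}{1}=\frac{0}{1}$ forces $a=0$. Since every element $\frac{a}{n}$ equals $\frac{1}{n}\cdot\iota(a)$, the image $\iota[A]$ spans $V$ over $\rat$; as $A$ is countable, $V$ has countable $\rat$-dimension.

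Finally, $V$ is a countable torsion-free divisible abelian group, so by Fact~\ref{f.ratvectorspace} it is isomorphic to a countable dimensional $\rat$-vector space, that is, to $\rat^{(J)}$ for some countable index set $J$; and $\rat^{(J)}$ embeds into $\rat^{(\nat)}$. Composing this isomorphism with $\iota$ yields the desired embedding $A\hookrightarrow\rat^{(\nat)}$.

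The only mild obstacle is the bookkeeping in verifying that the localization is well defined. Every such verification reduces to cancelling a nonzero integer from an equation in $A$, which is permissible exactly because $A$ is torsion-free; no step is genuinely delicate, and this is why the statement is recorded as a \emph{fact}.
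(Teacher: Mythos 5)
Your proof is correct, but it takes a genuinely different route from the paper. The paper's proof is two lines modulo citations: it invokes two theorems of Fuchs to embed an arbitrary countable abelian group into a group of the form $T\oplus\rat^{(\nat)}$ with $T$ torsion, and then observes that for a torsion-free group the projection onto the $\rat^{(\nat)}$ factor is injective, since the kernel of that projection is torsion. You instead build the divisible hull by hand, as the localization $V=(A\times\nat^+)/{\sim}$ (equivalently, $A\otimes_{\int}\rat$), verify that torsion-freeness is exactly what makes $\sim$ transitive and the operations well defined, and then identify the countable-dimensional $\rat$-vector space $V$ with $\rat^{(J)}\hookrightarrow\rat^{(\nat)}$. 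What the paper's approach buys is brevity and generality (the Fuchs machinery handles arbitrary countable abelian groups, with torsion-freeness entering only in the final projection step); what yours buys is self-containedness — you need no external structure theory of divisible groups, only the paper's own Fact~\ref{f.ratvectorspace} (and even that is dispensable, since your $V$ is already a $\rat$-vector space and choosing a basis suffices). One small remark: your injectivity argument for $\iota$ is even simpler than stated, since $\frac{a}{1}=\frac{0}{1}$ unwinds directly to $a=0$ by the definition of $\sim$, with no cancellation needed.
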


\begin{proof}
By \cite[Chapter~4., Theorem~1.4]{FUCHS} and \cite[Chapter~4., Theorem~3.1]{FUCHS}, every countable abelian group can be embedded into a group of the form $T \oplus \rat^{(\nat)}$, where $T$ is a torsion group. If be embed a torsion-free group into $T\oplus \rat^{(\nat)}$, then the projection to $\rat^{(\nat)}$ is injective, thus we get an embedding into $\rat^{(\nat)}$.
\end{proof}

\begin{lemma}\label{l.divisible}
The set
$\cald\defeq\{G\in\calt:\ \wtilde G\text{ is divisible}\}$ is dense $G_\delta$ in $\calt$ and thus comeager (see Section ~\ref{ss.baire_category}).
\end{lemma}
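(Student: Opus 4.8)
The plan is to prove that $\cald$ is both $G_\delta$ and dense in $\calt$, since a dense $G_\delta$ set is comeager and in particular dense $G_\delta$ gives the claimed conclusion.

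For the $G_\delta$ part, I would express divisibility as a countable intersection of open conditions, mirroring the style used for torsion-freeness and associativity earlier in the paper. Recall that a group is divisible if for every element $a$ and every positive integer $n$ there is some $b$ with $n\cdot b = a$. Thus I would write
$$\cald = \bigcap_{a\in\nat}\ \bigcap_{n\in\nat^+}\ \bigcup_{b\in\nat}\ \left\{G\in\calt:\ n\cdot b = a\right\}.$$
By Proposition~\ref{p.basis2}, each set $\{G\in\calt:\ n\cdot b = a\}$ is clopen (it is a basic condition involving a single linear combination $n\cdot b$ of one variable). Hence each inner union $\bigcup_b\{G:\ n\cdot b=a\}$ is open, and the whole intersection is $G_\delta$. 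This step is routine.

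For density, the natural tool is Lemma~\ref{l.dense1}: if I take $\cals = \calt$ and $\calp = \cald$, then it suffices to show that both are group properties (divisibility and torsion-freeness are clearly isomorphism-invariant) and that every $G\in\calt$ embeds, as $\wtilde G$, into some $\wtilde H$ with $H\in\cald$. The candidate for $H$ is a group operation on $\nat$ isomorphic to $\rat^{(\nat)}$, which is torsion-free and divisible, hence lies in $\cald$. By Fact~\ref{f.embedrat}, every countable torsion-free abelian group embeds into $\rat^{(\nat)}$, so $\wtilde G$ embeds into (a copy of) $\rat^{(\nat)}$ as required. One minor bookkeeping point is to fix a group operation on $\nat$ representing $\rat^{(\nat)}$ so that it genuinely lies in $\calt$ (countably infinite, torsion-free, with $0$ as identity); this is immediate since $\rat^{(\nat)}$ is a countably infinite torsion-free abelian group.

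The main obstacle, such as it is, is simply verifying the hypotheses of Lemma~\ref{l.dense1} cleanly rather than any deep difficulty: I must confirm that $\cald\subseteq\calt$ (divisible torsion-free groups are torsion-free, so yes) and that the embedding guaranteed by Fact~\ref{f.embedrat} is an embedding of the groups $\wtilde G$ in the sense required by the lemma. Once those are in place, Lemma~\ref{l.dense1} directly yields density of $\cald$ in $\calt$, and combined with the $G_\delta$ computation above this shows $\cald$ is dense $G_\delta$, completing the proof.
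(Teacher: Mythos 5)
Your proposal is correct and follows essentially the same route as the paper: divisibility is written as a countable intersection of countable unions of clopen sets (clopen by Proposition~\ref{p.basis2}), and density follows from Fact~\ref{f.embedrat} together with Lemma~\ref{l.dense1} applied to $\cald\subseteq\calt$ with $\rat^{(\nat)}$ as the divisible target. The only cosmetic difference is that you quantify the element to be divided over all of $\nat$ rather than $\nat^{+}$, which is harmless.
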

\begin{proof}
    The proof is essentially the same as \cite[Proposition 3.21]{ELEKES22}, but as it is a relatively short proof, we reproduce it here. First, we prove the $G_\delta$ part. By definition:
$$\cald=\{G\in\calt:\ \forall n, k\in \nat^{+}\ \exists m\in\nat\ (k\cdot m=n)\}=\bigcap_{n,k\in\nat^{+}} \bigcup_{m\in\nat}\ \underbrace{\{G\in\calt:\ k\cdot m=n\}}_{\text{clopen in $\calt$ by Proposition~\ref{p.basis2}}},$$
which is $G_\delta$ in $\calt$.

For the density, we know by Fact~\ref{f.embedrat} that every countable torsion-free abelian group can be embedded into $\rat^{(\nat)}$, which is a countably infinite divisible torsion-free abelian group. Thus Lemma~\ref{l.dense1} completes the proof.
\end{proof}

\begin{theorem}\label{t.generic_torsionfree}
    The isomorphism class of $(\rat,+)$ is comeager in the space $\calt$ of countably infinite torsion-free abelian groups.
\end{theorem}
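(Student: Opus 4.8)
The plan is to show that the isomorphism class of $(\rat,+)$ is a \emph{dense $G_\delta$} subset of $\calt$, which is comeager by the characterization of comeager sets recalled in Subsection~\ref{ss.baire_category} (recall that $\calt$ is Polish). The starting observation is that, inside $\calt$, this isomorphism class coincides with $\cald\cap\mathcal R$, where
$$\mathcal R\defeq\{G\in\calt:\ \wtilde G\text{ has rank }\le 1\}$$
and rank denotes the maximal number of $\int$-linearly independent elements. Indeed, every $G\in\calt$ is countably infinite and torsion-free, hence of rank $\ge 1$; thus a group in $\cald\cap\mathcal R$ is divisible, torsion-free and of rank exactly $1$, and by Fact~\ref{f.ratvectorspace} such a group is a $1$-dimensional $\rat$-vector space, i.e. isomorphic to $\rat$. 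The reverse inclusion is clear, since $\rat$ is divisible of rank $1$.

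For the $G_\delta$ part I would combine two ingredients. First, $\cald$ is $G_\delta$ by Lemma~\ref{l.divisible}. Second, $\mathcal R$ is $G_\delta$: writing the rank-$\le 1$ condition as
$$\mathcal R=\bigcap_{a,b\in\nat}\ \bigcup_{(m,n)\in\int^2\setminus\{0\}}\{G\in\calt:\ m\cdot a+n\cdot b=0\},$$
each set $\{G:\ m\cdot a+n\cdot b=0\}$ is clopen by Proposition~\ref{p.basis2}, so the inner union is open and the whole expression is $G_\delta$. Hence $\cald\cap\mathcal R$ is $G_\delta$.

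The heart of the argument, and the step I expect to be the main obstacle, is density. Note first that Lemma~\ref{l.dense1} is useless here: it would require embedding an arbitrary $\wtilde G\in\calt$ into a copy of $\rat$, which is impossible as soon as $G$ has rank $\ge 2$. This is precisely what the stronger Lemma~\ref{l.suppinjective} is designed to circumvent, since it only demands a homomorphism that is injective on a prescribed finite set. So, applying it with $\cals=\calt$ and $\calp$ the isomorphism class of $\rat$, I fix a nonempty basic clopen set $\calb=\{G\in\calt:\ \forall i,j\le k\ (i+j=m_{i,j})\}$, pick any $G_0\in\calb$, and produce a homomorphism $\varphi:\wtilde{G_0}\to\wtilde H$ (where $\wtilde H\cong\rat$) that is injective on the finite set $\supp\calb$. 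To build $\varphi$ I pass to the $\rat$-vector space $V\defeq\wtilde{G_0}\otimes_{\int}\rat$, into which $\wtilde{G_0}$ embeds because it is torsion-free. The elements of $\supp\calb$ then become finitely many distinct vectors of $V$, whose pairwise differences span a finite-dimensional subspace $W\le V$. On $W$ I can choose a $\rat$-linear functional $f_0:W\to\rat$ with $f_0(w)\ne 0$ for each of the finitely many nonzero differences $w=v-v'$; such an $f_0$ exists because the functionals killing a fixed nonzero $w$ form a proper subspace of the dual $W^{*}$, and a finite union of proper subspaces cannot exhaust a vector space over the infinite field $\rat$. Extending $f_0$ to a functional $f:V\to\rat$ and restricting to $\wtilde{G_0}$ gives a homomorphism into $\rat$ that is injective on $\supp\calb$; composing with a fixed isomorphism $\rat\cong\wtilde H$ yields the required $\varphi$. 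Lemma~\ref{l.suppinjective} then delivers density.

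Combining the three steps, the isomorphism class of $(\rat,+)$ is a dense $G_\delta$ subset of $\calt$, hence comeager, which is the assertion of the theorem.
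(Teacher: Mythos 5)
Your proof is correct, and its overall architecture coincides with the paper's: both identify the isomorphism class of $(\rat,+)$ inside $\calt$ as the intersection of the divisible groups with the rank-$\le 1$ groups, both get the $G_\delta$ part from Lemma~\ref{l.divisible} together with a countable intersection of open linear-dependence conditions via Proposition~\ref{p.basis2}, and both reduce density to Lemma~\ref{l.suppinjective} --- you are also right that Lemma~\ref{l.dense1} cannot work here. The one genuine divergence is the construction of the homomorphism $\varphi:\wtilde{G_0}\to\rat$ that is injective on $\supp\calb$. The paper embeds $\wtilde{G_0}$ into $\rat^{(\nat)}$ by Fact~\ref{f.embedrat}, clears denominators on the finitely many coordinates that occur in $\nu(\supp\calb)$, and then evaluates ``in base $K$'', i.e.\ applies $g\mapsto\sum_i K^i\nu(g)_i$ for $K$ larger than twice every coordinate involved, checking injectivity on $\supp\calb$ digit by digit. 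You instead pass to $V=\wtilde{G_0}\otimes_{\int}\rat$ and choose a $\rat$-linear functional avoiding the finitely many hyperplanes annihilating the nonzero differences of elements of $\supp\calb$, which exists because a vector space over the infinite field $\rat$ is not a finite union of proper subspaces. Your version is more conceptual and replaces the explicit digit computation by a standard linear-algebra fact; the paper's is more hands-on and stays entirely inside $\rat^{(\nat)}$. A further cosmetic difference is that the paper applies Lemma~\ref{l.suppinjective} with $\cals=\cald$ and then combines two comeager sets, whereas you work directly with $\cals=\calt$; this changes nothing of substance.
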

\begin{proof}
Since Lemma~\ref{l.divisible} says that divisible groups form a comeager subspace $\cald\subseteq\calt$, by Fact~\ref{f.ratvectorspace}, it suffices to prove that for the generic $G\in\cald$ we have $\dim_{\rat}G=1$. That is, we need to show that
$$\calo:=\{G\in \cald:\ \forall k,l \in \nat^{+} \exists \ m,n\in \nat^{+} (m\cdot k=n\cdot l)\}$$
is comeager in $\cald$.
It is easy to see that this set is $G_\delta$ in $\cald$, as we can write it as:
$$\bigcap_{k,l\in\nat^{+}}\ \bigcup_{m,n\in\nat^{+}}\ \underbrace{\{G\in\cald:\ m\cdot k= n\cdot l\}}_{\text{clopen in $\cald$ by Proposition~\ref{p.basis2}}}.$$
Now we prove the density. We would like to apply Lemma~\ref{l.suppinjective} for the group properties $\calo\subseteq\cald\subseteq\calt$. So let us fix any nonempty basic clopen set $\calb\subseteq\cals$. Pick any $G_0\in\calb$. As $\wtilde{H}\cong \rat$ for any $H\in\calo$, it is enough to find a homomorphism $\varphi:\wtilde{G_0}\rightarrow \rat$ such that $\varphi|_{\supp{\calb}}$ is injective. By Fact~\ref{f.embedrat}, there is an embedding $\nu:\wtilde{G_0}\rightarrow \rat^{(\nat)}$. Then there are only finitely many nonzero coordinates of the elements of $\nu(\supp\calb)$, let $\{\frac{a_i}{b_i}\}_{i=1}^n$ be an enumeration of them. We can suppose that $b_i=1$ for all $1\leq i\leq n$ since otherwise we can replace $\nu$ by $\nu':=N\cdot\nu$, where $N=\prod_{i=1}^n b_i$. Fix an integer $K$ such that $K>2|a_i|$ for every $1\leq i\leq n$. Now let us define $\varphi:\wtilde{G_0}\rightarrow\rat$ as follows:
$$\varphi(g):= \sum_{i=0}^\infty K^{i}\cdot \nu(g)_i,$$
 where $\nu(g)_i$ denotes the $i$th coordinate of $\nu(g)$. This sum is well-defined for all $g\in \wtilde{G_0}$ because every element of $\rat^{(\nat)}$ has finitely many nonzero coordinates. It is easy to check that $\varphi$ is a homomorphism. Now it remains to prove that $\varphi|_{\supp\calb}$ is injective. Take any $g,h\in\supp\calb$ such that $\varphi(g)=\varphi(h)$. We will prove by induction that $\nu(g)_n=\nu(h)_n$ for all $n$, which implies that $g=h$. For $n=0$ notice that  
 $$0=\varphi(g)-\varphi(h)=\sum_{i=0}^\infty K^{i}\cdot (\nu(g)_i-\nu(h)_i)\equiv \nu(g)_0-\nu(h)_0\pmod{K}.$$
But due to our assumption on $K$, we know that $|\nu(g)_0-\nu(h)_0|<K$ and thus $\nu(g)_0=\nu(h)_0$. For the inductive step, assume that $\nu(g)_i=\nu(h)_i$ for all $i\leq n-1$. Then:
$$0=\sum_{i=0}^\infty K^{i}\cdot (\nu(g)_i-\nu(h)_i)\equiv \sum_{i=0}^{n-1} K^{i}\cdot (\nu(g)_i-\nu(h)_i) = K^{n-1}(\nu(g)_n-\nu(h)_n)\pmod{K^{n}},$$
where the last equality holds because of the inductive hypothesis. But then $\nu(g)_n-\nu(h)_n)\equiv 0\pmod{K}$, from which we can conclude again that $\nu(g)_n=\nu(h)_n$. Thus $\nu(g)_n=\nu(h)_n$ for all $n$, which completes the proof. 
%We can suppose that all nonzero-coordinates of all element of $\{\nu(\supp\calb)\}$ are integers, since otherwise%We can suppose, that $\{\nu(\supp\calb)\}$ contains only integer numbers. %since otherwise we can modify $\nu'(g):=N\nu(g)$  where $N$ is the smallest common multiple 
\end{proof}

\section{Open questions}

As we have mentioned in the introduction, the universal odometer, which can be viewed as a 0-dimensional solenoid, appears in the description of the generic compact metrizable abelian group (see \cite[Section~4]{ELEKES22}). In the present paper, we proved that the (1-dimensional) universal solenoid is the generic connected compact metrizable abelian group. There are higher dimensional solenoids which are compact metrizable abelian groups.

\begin{question}
Is there a natural category where the generic object is a higher dimensional solenoid (or the product of higher dimensional solenoids)?
\end{question}

For a thorough survey on solenoids, see \cite{VERJOVSKY22} by A.~Verjovsky.

\section*{Acknowledgement}

We would like to thank Olga Lukina for the helpful discussions and the reference she pointed out.

This research was supported by the National Research, Development and Innovation Office -- NKFIH, grant no. 124749.
The second author was also supported by the National Research, Development and Innovation Office -- NKFIH, grant no.~129211. The third, fourth and fifth authors were also supported by the Hungarian Academy of Sciences Momentum Grant no. 2022-58. The fourth author was supported by the ÚNKP-22-1 New National Excellence Program of the Ministry for Innovation and Technology from the source of the National Research, Development and Innovation Fund.

\bibliographystyle{acm}
\bibliography{biblio}

\end{document}